\documentclass[a4paper, 12pt]{amsart}
\usepackage{amsmath,mathrsfs,amsthm}
\usepackage{txfonts,rotating,tikz}
\usetikzlibrary{intersections,shapes.arrows,calc}

\usepackage[all,cmtip]{xy}
\usepackage{amssymb}
\usepackage{amsfonts}
\usepackage{amscd}
\usepackage{color} 
\usepackage[utf8]{inputenc}
\usepackage{hyperref}
\usepackage{fullpage}
\usepackage{epic}
\usepackage{mathtools}
\usepackage{tikz}
\usepackage{tikz-cd}
\usetikzlibrary{chains}
\usepackage[thinlines]{easytable}
\usepackage{array}
\usepackage{ctable}
\usepackage{pbox}
\usepackage[usestackEOL]{stackengine}
\usepackage[sort, numbers]{natbib}
\tikzcdset{scale cd/.style={every label/.append style={scale=#1},
 cells={nodes={scale=#1}}}}

\newcolumntype{?}{!{\vrule width 1pt}}

\newtheorem*{theorem*}{Theorem}

\newtheorem{theorem}{Theorem}[section]

\newtheorem{lemma}[theorem]{Lemma}
\newtheorem{proposition}[theorem]{Proposition}
\newtheorem{corollary}[theorem]{Corollary}

\newtheorem{remark}[theorem]{Remark}

\theoremstyle{definition}
\newtheorem{definition}[theorem]{Definition}

\newtheorem{example}[theorem]{Example}

\newcommand{\be}{\begin{equation}}
	\newcommand{\ee}{\end{equation}}
\newcommand{\bt}{\begin{theorem}}
	\newcommand{\et}{\end{theorem}}
\newcommand{\bd}{\begin{definition}}
	\newcommand{\ed}{\end{definition}}
\newcommand{\bp}{\begin{proposition}}
	\newcommand{\ep}{\end{proposition}}
\newcommand{\bl}{\begin{lemma}}
	\newcommand{\el}{\end{lemma}}
\newcommand{\bco}{\begin{corollary}}
	\newcommand{\eco}{\end{corollary}}
\newcommand{\br}{\begin{remark}}
	\newcommand{\er}{\end{remark}}
\newcommand{\bex}{\begin{example}}
	\newcommand{\eex}{\end{example}}
\newcommand{\ben}{\begin{enumerate}}
	\newcommand{\een}{\end{enumerate}}
\newcommand{\bc}{\begin{cases}}
	\newcommand{\ec}{\end{cases}}

\newcommand{\bpf}{\begin{proof}}
	\newcommand{\epf}{\end{proof}}

\newcommand{\bma}{\begin{bmatrix}}
	\newcommand{\ema}{\end{bmatrix}}

\DeclareMathOperator{\Spec}{\mathrm{Spec}}

\newcommand{\ms}{\mathscr}
\newcommand{\mb}{\mathbb}

\newcommand{\mf}{\mathfrak}

\DeclareMathOperator{\Gr}{\mathrm{Gr}}

\newcommand{\Lie}{\mathrm{Lie}}
  
\newcommand{\FM}{\mathrm{Gr}^{\rm FM} }

\newcommand{\la}{\langle}  
\newcommand{\ra}{\rangle}

\newcommand{\beqn}{\begin{equation}}
	\newcommand{\eeqn}{\end{equation}}

\newcommand{\arxiv}[1]{\href{https://arxiv.org/abs/#1}{\texttt{arXiv:#1}}}

\makeatletter
\@namedef{subjclassname@2020}{%
  \textup{2020} Mathematics Subject Classification}
\makeatother

\begin{document}
	\title{Tangent spaces of spherical Schubert varieties and counterexamples to the reducedness conjecture}
	\author{Marc Besson}
	\address[M.Besson]{Yau Mathematical Sciences Center, Tsinghua University, Haidian District, Beijing, 100084, China}
\email{bessonm@tsinghua.edu.cn}
	\author{Jiuzu Hong}
	\address[J.Hong]{Department of Mathematics, University of North Carolina at Chapel Hill, Chapel Hill, NC 27599-3250, U.S.A.}
	\email{jiuzu@email.unc.edu}
	\author{Huanhuan Yu}
	\address[H.Yu]{Department of Mathematics, Jiangxi University of Finance and Economics, Nanchang, 330032, China}
	\email{yuhuanhuan@jxufe.edu.cn}
\subjclass[2020]{14M15, 22E67}
	
	\begin{abstract}
	
		Given a simply-connected simple algebraic group $G$, we determine the tangent space of any Finkelberg-Mirkovi\'c Schubert scheme at the base point of the affine Grassmannian of $G$. As a consequence, we exhibit non-reduced  Finkelberg-Mirkovi\'c Schubert schemes when $G$ is of type $E_6,E_7$ and $E_8$.		
	\end{abstract}
	\maketitle
	\section{Introduction}
	Let $G$ be a simply-connected simple algebraic group over $\mathbb{C}$ and let $\Gr_G$ denote the affine Grassmannian of $G$. For any dominant coweight $\lambda$,  let $\Gr_{\leq \lambda}$ denote the associated spherical Schubert variety, which is the reduced closure of the $L^+G$-orbit associated to $\lambda$.    On the other hand, Finkelberg and Mirkovi\'c  proposed in \cite{FM} a modular description of $\Gr_{\leq \lambda}$ . It is easy to verify that this moduli space is set-theoretically supported on the Schubert variety, but it is not at all clear that this moduli space is a reduced scheme. We refer to this moduli space as a Finkelberg-Mirkovi\'c Schubert scheme (abbreviated as FM Schubert scheme), denoted by $\Gr^{\rm FM}_{\leq \lambda}$.  Since then, it has been a long-standing open question whether FM Schubert schemes are reduced, that is  whether they agree with the usual spherical Schubert varieties.  This was explicitly formulated as a conjecture in \cite{KMW}; see also \cite[Conjecture 2.14, conjecture 2.10]{KWWY}, and the discussions in \cite[Remark 2.1.7]{Zh}. Subsequently, Kamnitzer-Muthiah-Weekes-Yacobi proved that the conjecture holds for type $A$  in \cite{KMWY}.  As explained in \cite{KWWY}, the validity of this conjecture also has significant applications to the quantization of affine Grassmannian slices and Yangians. 

The singularities of spherical Schubert varieties have been studied extensively in the literature. Notably, Evens-Mirkovi\'c proved in \cite{EM} that the smooth locus of $\Gr_{\leq \lambda}$ is its big Schubert cell; see a different and elegant proof by Malkin-Ostrik-Vybornov \cite{MOV}.  Recently, this result has been extended to twisted affine Grassmannians in \cite{BH} by the first two authors, which confirms a conjecture of Haines-Richarz \cite{HR}. Later, Pappas-Zhou \cite{PZ} gave a different proof by establishing a lower bound of the dimension of the tangent space of $\Gr_{\leq \lambda}$.  Subsequently, Kisin-Pappas-Zhou \cite{KPZ} gave an upper bound of the dimension of the tangent space of $\Gr^{\rm FM}_{\leq \lambda}$. Thus, this provides an upper bound for the tangent space of $\Gr_{\leq \lambda}$. For classical groups, in many cases they show that the two dimensions coincide. 

 Inspired by the work \cite{KPZ},  we explicitly compute the dimensions of the tangent spaces of the spherical Schubert variety $\Gr_{\leq \theta^\vee}$ and the FM Schubert scheme $\Gr^{\rm FM}_{\leq \theta^\vee}$ at the base point $e$, where $\theta^\vee$ is the quasi-minusucle coweight, equivalently, the
  coroot of the highest root $\theta$ of $G$.  We show that in Corollary \ref{tan_FM_theta},  except in type $E_8$, these two tangent spaces agree and have the same dimension as $G$; in the case of $E_8$, however  we prove that 
 $$\dim T_e \Gr_{\leq \theta^\vee}=\dim G, \quad \dim T_e \Gr^{\rm FM}_{\leq \theta^\vee}=2\dim G.$$  
 In fact, more generally in Theorem \ref{thm_tangent_base} we determine the tangent space of any FM Schubert scheme at the base point. 
 As a consequence, we conclude that the FM Schubert scheme $\Gr^{\rm FM}_{\leq \theta^\vee}$ is not reduced when $G$ is of type $E_8$.  After discovering this striking example,  we also found further examples using the same methods when $G$ is of type $E_6$ and $E_7$;  see Section \ref{Section 4.3}.  
 
 After communicating our counterexamples to Alex Weekes, he informed us that, using our tangent space calculations,  these examples also lead to a failure of the reducedness conjecture for the truncated shifted Yangians.

 \vspace{0.2cm}

{\bf Acknowledgement}:  We would like to thank R.\,Travkin for helpful discussions, especially on the proof of Lemma \ref{lem_4.4}. We also thank S.\,Kumar for valuable discussions and references.  In addition, we thank A.\,Weekes and T.\,Haines for careful reading of a preliminary version and helpful comments. 

%
	\section{Notation and preliminaries}
	\subsection{}
	Let $G$ be a simple algebraic group over $\mb{C}$. We fix a maximal torus $T$ and a Borel subgroup $B$ containing $T$ in $G$. Associated to $T$ we have weight and coweight lattices $X^*(T)$ and $X_*(T)$ respectively. We write in gothic letters the respective Lie algebras, so we have inclusions of Lie algebras $\mathfrak{t} \rightarrow \mathfrak{b} \rightarrow \mathfrak{g}$. 
	
	Let $\Phi$ be the set of roots with respect to the maximal torus $T$.  Let $W$ denote the Weyl group $N_G(T)/T$.  Via the choice of Borel subgroup $B$, we have the set $\Phi^+$ of positive roots, and the set $X^*(T)^+$ of dominant weights; dually,  we also have the set $\Phi^\vee$ of coroots  and the set $X_*(T)^+$ of  dominant coweights. For any $\alpha\in \Phi$, we denote $\mf{g}_\alpha$ the corresponding root subspace in $\mf{g}$, and we choose a root vector $x_\alpha\in \mf{g}_\alpha$. For each $
	\alpha\in \Phi$, we denote by $\alpha^\vee\in \Phi^\vee$ the associated coroot. We denote $
	\theta$ the highest root in $\Phi$, and $\theta^\vee$ the coroot of $\theta$.  
	
	 For any $\eta \in X^*(T)^+$ we write $V_{\eta}$ for the associated irreducible highest-weight representation of $G$ of highest weight $\eta$. We denote the dimension of $ V_\eta$ by $d_\eta$. 
	
	\subsection{}
	For any simple algebraic group $G$ over $\mathbb{C}$, let $LG$ denote the loop group functor and let $L^+G$ denote the jet group functor. These are defined as follows:  for any $\mb{C}$-algebra $R$, 
\[LG( R) =G(R((t))), \quad L^+G(R)=G(R[[t]]) .\]
Let $\Gr_G$ be the affine Grassmannian of $G$, which is defined as the quotient  $\Gr_G= LG/L^+G$ in the \'etale topology.  The affine Grassmannian $\Gr_G$ is an ind-scheme of finite type over $\mb{C}$.
	 For each $\lambda\in X_*(T)$, we can attach a point $t^\lambda\in LG$, and also an associated point $e_\lambda \in \Gr_G$.  Let $\Gr_\lambda$ denote the $L^+G$-orbit of $e_\lambda$ in $\Gr_G$.  Then, we have the Cartan decomposition 
	\[\mathrm{Gr}_G= \bigsqcup_{\lambda \in X_*(T)^+} \Gr_\lambda.\]
We denote by $\Gr_{\leq \lambda}$ the reduced closure of $\Gr_\lambda$ in $\Gr_G$. The variety $\Gr_{\leq \lambda}$ is called a spherical Schubert variety.  We have $\Gr_\mu\subset \Gr_{\leq \lambda}$ if and only if $\mu \preceq \lambda$,  where $\preceq$ is the standard partial order on the dominant coweights. 
	
Let $\mb{C}[\epsilon]$ denote the ring of dual numbers, i.e. $\epsilon$ is subject to the relation $\epsilon^2=0$. 	For any group functor $H$ over $\mb{C}$, the Lie algebra ${\rm Lie} (H)$ of $H$ is defined to be the kernel of the following evaluation map:  
	\[    H(\mb{C}[\epsilon] )\to H(\mb{C})  .  \]
Then, we have natural isomorphisms:  
\[  \Lie (LG)\simeq \mf{g}((t)), \quad \Lie(L^+G)\simeq \mf{g}[[t]].   \]	

	Let $L^-G$ be the group functor given by  
	$$L^-G( R)= G(R[t^{-1}]).$$
	In fact, $L^-G$ is represented by an ind-affine group scheme. Let $L^{--}G$ be the kernel of the evaluation map $L^-G\to G$. We have, 
	\[  \Lie( L^-G)\simeq \mf{g}[t^{-1}], \quad  \Lie( L^{--}G) \simeq t^{-1}\mf{g}[t^{-1}] .   \]
	By \cite[Corollary 3]{Fa}, there is an open immersion, by taking the orbit through $e \in \mathrm{Gr}_G$:
	$$L^{--}G \to \Gr_G.$$
	This induces an isomorphism of $G\times \mb{G}_m$-representations: 
	\begin{equation}
	\label{eq_tan_Gr}
	t^{-1}\mathfrak{g}[t^{-1}]\simeq T_e\mathrm{Gr}_G,   \end{equation}
	where the action of $\mb{G}_m$ is induced from the rotation torus acting on $LG$ and $\Gr_G$. 
	For later use, we decompose it explicitly as:
	\begin{equation}\label{eq_tan_Gr}
	T_e\mathrm{Gr}_G\simeq \bigoplus_{k\geq 1} \mf{g}t^{-k}\simeq \Big(\bigoplus_{\alpha\in \Phi, k\geq 1} \mf{g}_{\alpha}t^{-k}\Big)\oplus    \Big(\bigoplus_{k\geq 1} \mf{t} \,t^{-k}\Big).\end{equation} 

	\section{Tangent spaces of affine Schubert Varieties: Polo's criterion}
From now on, we assume that $G$ is a simply-connected simple algebraic group over $\mb{C}$.
\subsection{}
	We make passing use of the affine Kac-Moody algebra $\hat{\mathfrak{g}}$ associated to $\mathfrak{g}$; this is a central extension of the Lie algebra $ \mathfrak{g}((t))  \oplus \mathbb{C}d $ by the center $\mathbb{C}K$, where $d$ is the scaling element,  cf.\,\cite[\S 7]{Kac}.  The Lie subalgebras $\mf{g}[[t]]$ and $\mf{g}[t^{-1}]$ in $\mf{g}((t))$ have natural splittings in $\hat{\mf{g}}$. Thus,  they are all naturally Lie subalgebras in $\hat{ \mf{g} }$. Note that the scaling element $d$ corresponds to the rotation action of $\mb{G}_m$ on the loop group $LG$ and the loop algebra $\mf{g}((t))$. 
	
	 Set $\hat{\mf{t}}:= \mf{t}\oplus \mb{C}K \oplus \mb{C}d$.  The fundamental weight associated to the affine node in the affine Dynkin diagram of $\mf{g}$ is an element $\Lambda_0\in (\hat{\mf{t}})^*$ defined by 
	\[  \Lambda_0|_{\mf{t}\oplus \mb{C}d }=0, \quad \Lambda_0(K)=1 .  \]

\subsection{}

	In the setting of Schubert varieties of finite type,  Polo's criterion describes the Zariski tangent space of Schubert variety in terms of Demazure modules, see \cite{Po}.  This criterion has been extended to the setting of Kac-Moody Schubert varieties by Kumar, cf.\,\cite[Ex.12.1.E (9)]{Ku}.

	 Let $\mathscr{L}$ denote the level one line bundle on $\Gr_G$.  Denote by $\mathscr{H}_{\Lambda_0}$ the integrable highest weight representation of $\hat{\mathfrak{g}}$ with highest weight $\Lambda_0$.  By the affine analogue of the Borel-Weil theorem (cf.\,\cite[Theorem 8.3.11]{Ku}), there is an isomorphism of $\hat{\mf{g}}$-representations: 
	\begin{equation}
	\Gamma(\Gr_G, \mathscr{L})^*
 \simeq \mathscr{H}_{\Lambda_0}.   \end{equation}
For any $\lambda\in X_*(T)^+$, let $v_{t^\lambda(\Lambda_0)}$ be an extremal weight vector in $\mathscr{H}_{\Lambda_0}$, with the weight $t^\lambda(\Lambda_0)$ given by
\begin{equation}
\label{eq_extrem_wt}
 t^\lambda(\Lambda_0)= \Lambda_0- \iota(\lambda)- \frac{1}{2}(\lambda, \lambda) \delta  , \end{equation}
where $(\cdot,\cdot)$ is the normalized Killing form and $\iota: X_*(T)\to \mf{t}^*$ is the map induced from $(\cdot,\cdot)$.

We define the level one affine Demazure module $D(1,\lambda)$ as follows,
\begin{equation} D(1,\lambda):=  U(\mf{g}[[t]] )\cdot v_{t^\lambda(\Lambda_0)} \subset \ms{H}_{\Lambda_0}.  \end{equation}
Then, by \cite[Theorem 8.2.2]{Ku}, we have an isomorphism of $\mf{g}[[t]]$-modules:
\begin{equation}\Gamma(\Gr_{\leq \lambda}, \mathscr{L})^*\simeq  D(1,\lambda) . \end{equation}

	We state a special case of Polo's criterion for spherical Schubert varieties,  cf.\,\cite[Corollary 4.3]{HLR}:
	\begin{theorem}\label{KMPolo}
	For $\lambda\in X_*(T)^+$,  we have an isomorphism
	\[T_{e} \mathrm{Gr}_{\leq \lambda}\simeq \big\{ X\in t^{-1}\mf{g}[t^{-1}]  \, |\,  X.v_{ \Lambda_0} \in D(1,\lambda) \big\}\]
	as representations of $G\times \mb{G}_m$.
	\end{theorem}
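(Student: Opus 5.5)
The plan is to derive Theorem~\ref{KMPolo} from the general Polo–Kumar criterion for the tangent space of a Schubert variety at a $T$-fixed point. We combine it with the embedding of $\Gr_G$ into $\mathbb{P}(\mathscr{H}_{\Lambda_0})$ given by the very ample level one line bundle $\mathscr{L}$, and with the open immersion $L^{--}G\to \Gr_G$.

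\textbf{Step 1: the projective embedding.} The line bundle $\mathscr{L}$ gives an $LG$-equivariant closed embedding $\Gr_G\hookrightarrow \mathbb{P}(\mathscr{H}_{\Lambda_0})$, sending $e\mapsto [v_{\Lambda_0}]$ and $e_\lambda\mapsto [v_{t^\lambda(\Lambda_0)}]$. Under this embedding, $\Gr_{\leq\lambda}$ is the reduced closure of $L^+G\cdot[v_{t^\lambda(\Lambda_0)}]$. It therefore lies in $\mathbb{P}(D(1,\lambda))$, since $D(1,\lambda)$ is $L^+G$-stable. Projective normality of Schubert varieties and the isomorphism $\Gamma(\Gr_{\leq\lambda},\mathscr{L})^*\simeq D(1,\lambda)$ show that $D(1,\lambda)$ is exactly the linear span of the cone over $\Gr_{\leq\lambda}$. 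Moreover, $\Gr_{\leq \lambda}$ is scheme-theoretically cut out in $\mathbb{P}(D(1,\lambda))$ by the kernels of the restriction maps $S^n\Gamma(\Gr_G,\mathscr{L})\to \Gamma(\Gr_{\leq\lambda},\mathscr{L}^n)$.

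\textbf{Step 2: a coarse tangent bound.} Work in the big cell $L^{--}G\cdot e$, whose tangent space at $e$ is identified with $t^{-1}\mf{g}[t^{-1}]$ by \eqref{eq_tan_Gr}. A tangent vector $X$ at $e$ corresponds to the curve $\exp(\epsilon X)\cdot [v_{\Lambda_0}] = [v_{\Lambda_0}+\epsilon X.v_{\Lambda_0}]$. If $X\in T_e\Gr_{\leq\lambda}$, then this $\mb{C}[\epsilon]$-point lies in $\mathbb{P}(D(1,\lambda))$, so $X.v_{\Lambda_0}\in D(1,\lambda)$. This gives the inclusion ``$\subseteq$''.

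\textbf{Step 3: the reverse inclusion.} This is the main obstacle: linear span membership does not obviously imply that $X$ is tangent to the reduced scheme. Here we invoke Polo's argument as generalized by Kumar \cite[Ex.12.1.E(9)]{Ku}; compare \cite[Corollary 4.3]{HLR}. The ideal of $\Gr_{\leq\lambda}$ in degree $n$ is the annihilator of $D(n,\lambda)=U(\mf{g}[[t]])\cdot v_{t^\lambda(\Lambda_0)}^{\otimes n}\subset \mathscr{H}_{\Lambda_0}^{\otimes n}$. The first-order condition at $e$ for $X$ is that $\sum_i v_{\Lambda_0}^{\otimes i}\otimes X v_{\Lambda_0}\otimes v_{\Lambda_0}^{\otimes n-1-i}$ lies in $D(n,\lambda)$ for all $n$. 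We deduce this from the degree-one condition $Xv_{\Lambda_0}\in D(1,\lambda)$ in two moves. First, $v_{\Lambda_0}\in D(1,\lambda)$, since $e\in \Gr_{\leq\lambda}$. Second, $D(n,\lambda)$ contains the symmetrized tensors of elements of $D(1,\lambda)$ arising from the Cartier-type identity $D(n,\lambda)^*=\Gamma(\Gr_{\leq\lambda},\mathscr{L}^n)$ together with surjectivity of multiplication $\Gamma(\mathscr{L})^{\otimes n}\to\Gamma(\mathscr{L}^n)$ on Schubert varieties. The first attempt will be to dualize: show that any section $s\in\Gamma(\Gr_{\leq\lambda},\mathscr{L}^n)$ that is a product of level one sections has derivative at $e$ along $X$ given by the Leibniz rule in terms of the pairings $\langle s_j, Xv_{\Lambda_0}\rangle$. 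Those pairings are well defined on $\Gr_{\leq\lambda}$ precisely because $Xv_{\Lambda_0}\in D(1,\lambda)$.

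\textbf{Step 4: equivariance.} Finally, $G\times\mb{G}_m$-equivariance follows because $G\times\mb{G}_m$ fixes $e$ and $v_{\Lambda_0}$ up to scalar, and preserves $D(1,\lambda)$, $\Gr_{\leq\lambda}$ and the identification \eqref{eq_tan_Gr}.
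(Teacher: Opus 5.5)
Your Steps 1, 2 and 4 are fine, and Step 2 does give ``$\subseteq$''. The paper itself gives no argument here: it quotes the statement as a special case of Polo's criterion, \cite[Ex.12.1.E (9)]{Ku}, \cite[Corollary 4.3]{HLR}. Insofar as your Step 3 is just that citation, you agree with the paper.

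The mechanism you sketch for Step 3, however, has a genuine gap. Write $v=v_{\Lambda_0}$. You need $X\cdot v^{\otimes n}=\sum_i v^{\otimes i}\otimes Xv\otimes v^{\otimes n-1-i}\in D(n,\lambda)$. This vector lies automatically in $\mathscr{H}_{n\Lambda_0}\cap D(1,\lambda)^{\otimes n}$, so what you need is a \emph{lower} bound on $D(n,\lambda)$. The facts you invoke are $\Gamma(\Gr_{\le\lambda},\mathscr{L}^n)^*\simeq D(n,\lambda)$ and surjectivity of $\Gamma(\Gr_{\le\lambda},\mathscr{L})^{\otimes n}\to\Gamma(\Gr_{\le\lambda},\mathscr{L}^n)$. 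These dualize only to the inclusion $D(n,\lambda)\hookrightarrow D(1,\lambda)^{\otimes n}$, which is an upper bound.

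No argument built from these inputs alone can succeed. A smooth conic $Y\subset\mathbb{P}^2$ is projectively normal, its sections are restricted from $\mathbb{P}^2$, and its cone spans $\mathbb{C}^3$. Yet at $p\in Y$ the tangent line $T_pY$ is strictly smaller than $\{X\in T_p\mathbb{P}^2 : Xv\in\mathbb{C}^3\}=T_p\mathbb{P}^2$.

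Your Leibniz-rule dualization is circular for the same reason. The pairings $\langle s_j,Xv\rangle$ are defined because $Xv\in D(1,\lambda)$. But to obtain a well-defined derivation on $\bigoplus_n\Gamma(\Gr_{\le\lambda},\mathscr{L}^n)$, the Leibniz expression must vanish on every relation $\sum_\alpha s_{\alpha,1}\cdots s_{\alpha,n}=0$ holding on $\Gr_{\le\lambda}$. That is exactly the tangency you are trying to prove.

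The missing ingredient is that $\Gr_{\le\lambda}$ is \emph{linearly defined} in $\Gr_G$: $\Gr_{\le\lambda}=\Gr_G\cap\mathbb{P}(D(1,\lambda))$ scheme-theoretically. Equivalently, near $e$ the ideal of $\Gr_{\le\lambda}$ in $\Gr_G$ is generated by the level one sections in $J_1:=D(1,\lambda)^{\perp}\subset\Gamma(\Gr_G,\mathscr{L})$. This is a genuine theorem: Ramanathan's result that Schubert varieties are cut out by linear equations, which extends to Kac--Moody (in particular affine) Schubert varieties via Frobenius splitting. To my understanding it is the input underlying Polo's criterion, and the conic is precisely a case where it fails.

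With it, the reverse inclusion takes two lines. Since $X\in t^{-1}\mathfrak{g}[t^{-1}]=T_e\Gr_G$, the point $[v+\epsilon Xv]$ already lies in $\Gr_G(\mathbb{C}[\epsilon])$. The only further equations are the $\ell\in J_1$, and $\ell(v+\epsilon Xv)=\epsilon\,\langle\ell,Xv\rangle=0$ because $\ell(v)=0$ and $Xv\in D(1,\lambda)$.
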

	
	The following proposition is known to experts, cf.\,\cite[Proposition 6.1]{HLR}. For the convenience of readers, we provide two self-contained proofs with different perspectives. 
	
	\begin{proposition}
	\label{prop_tan_Sch}
	Let $G$ be a simple linear algebraic group over $\mathbb{C}$. Then \[T_e\mathrm{Gr}_{\leq \theta^{\vee}} \simeq  \mathfrak{g}t ^{-1} \subset t^{-1} \mathfrak{g}[t^{-1}]\]
	as representations of $G$, 
	where $\theta$ is the highest root of $G$ and $\theta^\vee$ is the coroot of $\theta$.
	\end{proposition}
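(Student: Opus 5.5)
We use Theorem \ref{KMPolo} together with a weight argument for the rotation torus. The proof has two halves: an upper bound showing that nothing of $t$-degree $\le -2$ lies in the tangent space, and a lower bound showing that all of $\mf{g}t^{-1}$ does.

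\textbf{Upper bound.} Since $T_e\Gr_{\leq\theta^\vee}$ is a $G\times\mb{G}_m$-subrepresentation of $t^{-1}\mf{g}[t^{-1}]$, it decomposes by degree as $\bigoplus_{k\ge1} V_k t^{-k}$, with each $V_k\subset\mf{g}$ a $G$-subrepresentation. As $\mf{g}$ is simple, each $V_k$ is either $0$ or $\mf{g}$. So the upper bound reduces to showing $\mf{g}t^{-k}\not\subset T_e\Gr_{\leq\theta^\vee}$ for every $k\geq 2$.

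For this we compare $d$-weights. The vector $v_{\Lambda_0}$ has $\delta$-degree $0$. By \eqref{eq_extrem_wt}, the extremal vector $v_{t^{\theta^\vee}(\Lambda_0)}$ has $\delta$-degree $-\tfrac12(\theta^\vee,\theta^\vee)=-1$, using the normalization that long coroots have square length $2$. The module $D(1,\theta^\vee)$ is generated from this vector by $U(\mf{g}[[t]])$, which only raises $t$-degree. Hence every weight of $D(1,\theta^\vee)$ has $\delta$-degree $\ge -1$. On the other hand, for $X\in\mf{g}t^{-k}$ the vector $X.v_{\Lambda_0}$ has $\delta$-degree $-k$. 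It is nonzero whenever $X\neq0$ and $k\ge1$, since $\mathscr{H}_{\Lambda_0}$ is integrable irreducible and $\Lambda_0-k\delta+\alpha$ is then a weight. So for $k\ge2$ we get $X.v_{\Lambda_0}\notin D(1,\theta^\vee)$, which forces $V_k=0$.

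\textbf{Lower bound.} It remains to show $\mf{g}t^{-1}\cdot v_{\Lambda_0}\subset D(1,\theta^\vee)$. By $G$-equivariance it suffices to exhibit a single nonzero element of $\mf{g}t^{-1}$ that works, because $V_1$ is then a nonzero $G$-submodule of the simple module $\mf{g}$. Take $X=x_{-\theta}t^{-1}$, the affine simple-root vector $f_0$ up to scalar. The weight of $x_{-\theta}t^{-1}.v_{\Lambda_0}$ is
\[\Lambda_0-\theta-\delta=\Lambda_0-\iota(\theta^\vee)-\tfrac12(\theta^\vee,\theta^\vee)\delta=t^{\theta^\vee}(\Lambda_0),\]
using $\iota(\theta^\vee)=\theta$ for the long root $\theta$. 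Extremal weight spaces of $\mathscr{H}_{\Lambda_0}$ are one-dimensional. Moreover $f_0 v_{\Lambda_0}=s_0(v_{\Lambda_0})$ up to scalar, since $\Lambda_0(\alpha_0^\vee)=1$. Hence $x_{-\theta}t^{-1}.v_{\Lambda_0}$ is a nonzero multiple of $v_{t^{\theta^\vee}(\Lambda_0)}$, and so it lies in $D(1,\theta^\vee)$.

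\textbf{Main obstacle.} The key step is identifying the weights correctly: checking that $\Lambda_0-\theta-\delta$ coincides with $t^{\theta^\vee}(\Lambda_0)$, and that this weight is extremal, equivalently that it equals $s_0\Lambda_0$, so its weight space is one-dimensional. Once this is done, the degree bound and the $G$-irreducibility argument are routine.

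Combining the two halves gives $T_e\Gr_{\leq\theta^\vee}=\mf{g}t^{-1}$ as $G$-representations.
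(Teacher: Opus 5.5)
Your strategy is the paper's first proof in a slightly leaner form. You use Polo's criterion and the observation that $D(1,\theta^\vee)$ lives in $d$-degrees $0$ and $-1$. You also use irreducibility of $\mf{g}$, which lets you avoid computing $D(1,\theta^\vee)=\mb{C}v_{\Lambda_0}\oplus\mf{g}t^{-1}.v_{\Lambda_0}$ from the Serre-relation presentation. However, two steps are justified incorrectly.

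\textbf{Lower bound.} $x_{-\theta}t^{-1}$ is not $f_0$. With $\alpha_0=\delta-\theta$ one has $e_0=x_{-\theta}t$ and $f_0=x_\theta t^{-1}$, and $s_0\Lambda_0=\Lambda_0-\alpha_0=\Lambda_0+\theta-\delta\neq\Lambda_0-\theta-\delta$. So your ``extremal, equivalently equal to $s_0\Lambda_0$'' is false, and your nonvanishing argument for $x_{-\theta}t^{-1}.v_{\Lambda_0}$ does not stand. Fortunately nonvanishing is not needed here. The weight $\Lambda_0-\theta-\delta=t^{\theta^\vee}(\Lambda_0)=s_\theta s_0\Lambda_0$ is extremal, so its weight space is the line $\mb{C}v_{t^{\theta^\vee}(\Lambda_0)}$. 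Hence $x_{-\theta}t^{-1}.v_{\Lambda_0}\in D(1,\theta^\vee)$ whether or not it is zero, which is all the lower bound requires.

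\textbf{Upper bound.} Here nonvanishing is essential: if $\mf{g}t^{-k}$ killed $v_{\Lambda_0}$, it would lie in the tangent space trivially. Your reason, that $\Lambda_0-k\delta+\alpha$ is a weight of $\mathscr{H}_{\Lambda_0}$, does not imply it. A nonzero weight space says nothing about whether the particular vector $X.v_{\Lambda_0}$ vanishes, since for $k\geq 2$ that space also contains other PBW monomials. A correct one-line argument: for $\alpha\in\Phi$ and $k\geq1$, $x_\alpha t^{k}$ kills $v_{\Lambda_0}$, so
\[ x_\alpha t^{k}\cdot x_{-\alpha}t^{-k}.v_{\Lambda_0}=\big([x_\alpha,x_{-\alpha}]+k(x_\alpha,x_{-\alpha})K\big).v_{\Lambda_0}=k(x_\alpha,x_{-\alpha})\,v_{\Lambda_0}\neq0, \]
using $\Lambda_0|_{\mf{t}}=0$ and $\Lambda_0(K)=1$. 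Thus $\mf{g}t^{-k}.v_{\Lambda_0}\neq0$, which suffices given your reduction to $V_k\in\{0,\mf{g}\}$.

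There is also a small slip: $\theta^\vee$ is a short coroot, and it is the coroots of long roots that have square length $2$. With these two repairs the argument is complete.
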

	
	\begin{proof}
	The affine Demazure module $D(1,\theta^\vee)$ is the $U(\mf{g}[t])$-submodule of $\mathscr{H}_{\Lambda_0}$ generated by the extremal weight vector $v_{t^{\theta^{\vee}}\Lambda_0}$. By (\ref{eq_extrem_wt}),  the $\mathfrak{t}$-weight of $v_{t^{\theta^{\vee}}\Lambda_0}$ is $-\theta$, and the rotation degree is $-\frac{1}{2}(\theta^\vee, \theta^\vee)=-1$.  In fact, one may see easily that, up to a nonzero scalar,
	$$v_{t^{\theta^{\vee}}\Lambda_0}= (x_{-\theta}t^{-1})\cdot  v_{\Lambda_0},$$
	which is a lowest weight vector with respect to the action of $\mf{g}$. 
	By the Serre relations for integrable highest weight representation of $\hat{\mf{g}}$, there is an isomorphism:
	\begin{equation}
	\label{eq_serre}
	\mathscr{H}_{\Lambda_0}\simeq \frac{\mathrm{U}(t^{-1}\mathfrak{g}[t^{-1}])}{\mathrm{U}(t^{-1}\mathfrak{g}[t^{-1}])(x_{\theta}t^{-1})^2}\simeq \mathbb{C}v_{\Lambda_0}\oplus \mathfrak{g} t^{-1}.v_{\Lambda_0}\oplus\cdots\end{equation}
respecting the actions of $\mf{g}$ and the scaling element $d$, 
	where $\mathbb{C}v_{\Lambda_0}$ has $d$-weight $0$, $ \mathfrak{g}t^{-1}$ has $d$-weight $-1$ and all other terms have lower $d$-weight.  Thus, the $U(\mf{g}[t])$-action on $v_{t^{\theta^{\vee}} \Lambda_0}$ generates a copy of the adjoint representation $\mathfrak{g}$ with $d$-weight $-1$ as well as the highest weight line $\mb{C} v_{\Lambda_0}$:
	\[D(1,\theta^\vee)=\mathbb{C} v_{\Lambda_0} \oplus \mathfrak{g}t^{-1}.\]
Then, for any $X\in \mf{g}t^{-1}$, $X\cdot v_{\Lambda_0}\in D(1, \theta^\vee)$. Moreover, by applying PBW theorem for $U(t^{-1}\mf{g}[t^{-1}])$ in (\ref{eq_serre}), for any nonzero vector $X\in \mf{g}t^{-k}$ with $k\geq 1$,  $X\cdot v_{\Lambda_0}\not=0$. Then, by Theorem \ref{KMPolo}, we can conclude that   
	\[T_e \mathrm{Gr}_{\leq \theta^{\vee}} \simeq   \mathfrak{g}t^{-1}.\]
		\end{proof}

In the following, using the work \cite{MOV} we provide a different proof of Proposition \ref{prop_tan_Sch}. 	
	\begin{proof}
	Let $L^{--}G\cdot e$ be the orbit of $L^{--}G$ at $e$. Then, $(L^{--}G\cdot e)\cap \Gr_{\leq \theta^\vee}$ is an open neighborhood of $e$ in $\Gr_{\leq \theta^\vee}$.  Let $\overline{\mb{O}}_{\rm min}$ be the minimal nilpotent variety in $\mf{g}$, which is the closure of $G$-orbit at $x_\theta$.  By \cite[Lemma 2.10]{MOV},  the map 
	$$\overline{\mb{O}}_{\rm min}\to(L^{--}G\cdot e)\cap  \Gr_{\leq \theta^\vee}  $$
given by $x\mapsto {\rm exp}(xt^{-1})\cdot e$, is a $G$-equivariant isomorphism of varieties.  Thus, we have 
\begin{equation}
\label{eq_tan_min}
  T_0 ( \overline{\mb{O}}_{\rm min} )\simeq  T_e  \Gr_{\leq \theta^\vee}.   \end{equation}
By the argument in \cite[\S 2.9]{MOV}, $T_0 ( \overline{\mb{O}}_{\rm min} )\simeq \mf{g}$.  Then, it is easy to see the image of the map (\ref{eq_tan_min}) lands in $\mf{g}t^{-1}$ via the identification (\ref{eq_tan_Gr}).
	\end{proof}
	
	\section{Tangent spaces of FM Schubert schemes}
	\subsection{ }
 For any $\mb{C}$-algebra $R$, set
	\[ D_R:= \mathrm{Spec} R[[t]], \quad   D_R^*:= \mathrm{Spec} R((t)).   \]
	We write $\mathscr{E}$ for a $G$-torsor and $\mathring{\mathscr{E}}$ for the trivial $G$-torsor. The affine Grassmannian $\Gr_G$  can be described as the following moduli space: 
	\[\Gr_G(R)=\{(\mathscr{E}, \phi)\,|\, \mathscr{E} \text{ is a $G$-torsor over } D_R, \phi: \mathscr{E}|_{D^*_R} \xrightarrow{\simeq} \mathring{\mathscr{E}}|_{D^*_R}\}  .\]
Since $G$ is simple,  $\Gr_G$ is reduced.  	
	In light of the moduli description of $\Gr_G$, it is natural to also seek moduli interpretations of $\Gr_{\leq \lambda}$, and the first such proposal was given by Finkelberg-Mirkovi\'c \cite{FM}.
		
	For any $G$-torsor $\ms{E}$ over $D_R$,  
	for each dominant weight $\eta  \in X^*(T)^+$,  we write $\mathscr{E}(\eta)$ for the associated vector bundle $\mathscr{E} \times^{G} V_{\eta}$ over $D_R$.  We naturally have isomorphisms of $R[[t]]$-modules:
	$$\Gamma(D_R, \mathring{\mathscr{E}}(\eta)  )\simeq V_\eta\otimes R[[t]], \quad \Gamma(D^*_R, \mathring{\mathscr{E}}(\eta)|_{D^*_R})\simeq V_\eta\otimes R((t)).$$
For any point $(\ms{E}, \phi)\in \Gr_G(R)$,  let $L_{\eta,R}$ denote the lattice  $\phi ( \Gamma(D_R, \ms{E}(\eta))  )$ in $V_\eta\otimes R((t))$.

 By Beauville–Laszlo theorem, the definition of FM Schubert Scheme in \cite[\S 2.3]{KMW} is equivalent to the following one; see also \cite[Remark 2.1.7]{Zh}.
	\begin{definition}
	\label{def_FM_sch}
	Let $\Gr_{\leq \lambda}^{\rm FM}$ be the representable subfunctor of $\Gr_G$ whose $R$-points are $(\mathscr{E}, \phi) \in \Gr_G(R)$ such that for any $\eta\in X^*(T)^+$
	\begin{equation}
	\label{eq_def_FM}
	   V_\eta\otimes R[[t]]   \subseteq  t^{-\langle \lambda, \eta \rangle}  L_{\eta, R}.\end{equation}
	\begin{remark}
	By taking the dual, the condition (\ref{eq_def_FM}) applies to $-w_0(\eta)$ yields to the condition: $t^{-\la \lambda, w_0(\eta) \ra} L_{\eta,R}\subset V_\eta\otimes R[[t]]$, where $w_0$ is the longest element of the Weyl group $W$.  Our condition (\ref{eq_def_FM}) differs by a twist of $-w_0$ from that in other references. This condition is exactly compatible with the loop group action on the lattices in (\ref{eq_Plu_act} ).
		\end{remark}
	
	\end{definition}
It is known that $\Gr_{\leq \lambda}^{\rm FM}$ is a closed subscheme of finite type in $\Gr_G$, and the scheme $\Gr_{\leq \lambda}^{\rm FM}$ is related to the spherical Schubert variety $\mathrm{Gr}_{\leq \lambda}$ as follows: there is a closed immersion $\mathrm{Gr}_{\leq \lambda} \rightarrow \mathrm{Gr}_{\leq \lambda}^{\rm FM}$
	such that 
	\[(\mathrm{Gr}_{\leq \lambda}^{\rm FM})_{\rm red} \simeq \mathrm{Gr}_{\leq \lambda},\]
cf.\,\cite[Proposition 2.1.4]{Zh}.	It is a long-standing question if $\mathrm{Gr}_{\leq \lambda}^{\rm FM} \simeq \mathrm{Gr}_{\leq \lambda}$.
	
Following \cite{Hai,HJ}, the affine Grassmannian $\Gr_G$ admits a Pl\"ucker description:  the set $\Gr_G(R)$ can be identified with the set of collections 
\[    \{   L_{\eta,R}  \}_{\eta\in X^*(T)^+ }  \]
consisting of $R[[t]]$-lattices $L_{\eta, R}\subset V_\eta \otimes R((t))$, satisfying the natural Pl\"ucker relations as in \cite[Proposition 6.1]{Hai}.  From this perspective, we can describe the action of $G(R((t)) )$ on $\Gr_G(R)$ as follows, for any $g\in G(R((t)) )$,  
\begin{equation}
\label{eq_Plu_act}
 g\cdot  \{   L_{\eta,R}  \}_{\eta\in X^*(T)^+ } = \{  g\cdot L_{\eta,R}   \}_{\eta\in X^*(T)^+ } , \end{equation}
	where $g\cdot L_{\eta,R} $ is another $R[[t]]$-lattice in $V_\eta \otimes R((t))$ translated by $g$, with the action induced from the representation of $G$ on $V_\eta$. 
	
For convenience, set 
	$$L^0_{\eta, R}:= V_\eta\otimes R[[t]] .$$
	  Then, $\{ L^0_{\eta, R} \}_{\eta\in X^*(T)^+}$ corresponds to the base point $e$ in $\Gr_G(R)$.  

Recall that $T_e\Gr_G$ is the fiber of the evaluation map
\[ {\rm ev}: \Gr_G(\mb{C}[\epsilon] )\to  \Gr_G(\mb{C}) \]
over the base point $e\in \Gr_G(\mb{C})$.
In terms of the Pl\"ucker moduli interpretation,  $T_e\Gr_G$ can be identified with the set of those collections $\{ L_{\eta, \mb{C}[\epsilon]}  \}_{\eta\in X^*(T)^+}$ such that for each $\eta\in X^*(T)^+$, 
$$L_{\eta, \mb{C}[\epsilon]}/\epsilon L_{\eta, \mb{C}[\epsilon]}\simeq V_\eta\otimes \mb{C}[[t]] ,$$
 and these isomorphisms are compatible with the Pl\"ucker relations for $\{ L_{\eta,\mb{C}[\epsilon]} \}$ and $\{V_\eta\otimes \mb{C}[[t]] \}$. 

Since the orbit map $L^{--}G\to \Gr_G$ through the base point $e$ is an open immersion and $\Lie (L^{--}G)\simeq t^{-1}\mf{g}[t^{-1}] $, the moduli interpretation of the isomorphism (\ref{eq_tan_Gr}) is the following:
\begin{equation}
\label{eq_tan_sp}
  T_e\Gr_G = \Big\{  \big\{(1+X \epsilon )\cdot L^0_{\eta, \mb{C}[\epsilon]}    \big\}_{\eta\in X^*(T)^+}  \,\big |\,    X\in t^{-1}\mf{g}[t^{-1}]      \Big\}     .\end{equation}
  
We have the following lemma which helps to compute the tangent space $T_e\FM_{\leq \lambda}$ of $\FM_{\leq \lambda}$ at $e$. \begin{lemma}
 \label{lem_crit_FM}
 The tangent space $T_e\FM_{\leq \lambda}$ is isomorphic to the subspace of $t^{-1}\mf{g}[t^{-1}]$ consisting of those $X\in t^{-1}\mf{g}[t^{-1}]$ such that for any $\eta\in X^*(T)^+$, 
 \[  L^0_{\eta, \mb{C}[\epsilon]} \subseteq t^{-\la \lambda, \eta \ra} (1+ X\epsilon)\cdot L^0_{\eta, \mb{C}[\epsilon]} ;\]
 equivalently,
 \[ (1-X\epsilon)\cdot   L^0_{\eta, \mb{C}[\epsilon]} \subseteq t^{-\la \lambda, \eta \ra} L^0_{\eta, \mb{C}[\epsilon]}  .\] 
 Furthermore, this is an isomorphism of $G\times \mb{G}_m$-representations.
 \end{lemma}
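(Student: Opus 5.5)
The plan is to compute the tangent space directly from the functor-of-points description. First, since $\FM_{\leq \lambda}$ is a closed subfunctor of $\Gr_G$ containing $e$, the tangent space $T_e\FM_{\leq\lambda}$ is the subset of $T_e\Gr_G$ consisting of those $\mb{C}[\epsilon]$-points of $\Gr_G$ lying over $e$ that are also $\mb{C}[\epsilon]$-points of $\FM_{\leq\lambda}$. By (\ref{eq_tan_sp}), every such point of $\Gr_G$ has the form $\{(1+X\epsilon)\cdot L^0_{\eta,\mb{C}[\epsilon]}\}_\eta$ for a unique $X\in t^{-1}\mf{g}[t^{-1}]$. Uniqueness comes from the open immersion $L^{--}G\to\Gr_G$, which identifies $\Lie(L^{--}G)$ with $T_e\Gr_G$. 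Here $1+X\epsilon$ is viewed as an element of $L^{--}G(\mb{C}[\epsilon])\subset G(\mb{C}[\epsilon]((t)))$. By the Plücker description (\ref{eq_Plu_act}), the lattice attached to this point for $\eta$ is
\[L_{\eta,\mb{C}[\epsilon]}=(1+X\epsilon)\cdot L^0_{\eta,\mb{C}[\epsilon]}.\]
Substituting this into Definition~\ref{def_FM_sch} with $R=\mb{C}[\epsilon]$ gives exactly the first condition. No separate check of the Plücker relations is needed, because the point already lies in $\Gr_G(\mb{C}[\epsilon])$.

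For the equivalent form, apply the group element $(1+X\epsilon)^{-1}=1-X\epsilon$ (using $\epsilon^2=0$) to both sides of the inclusion. Acting by an element of $G(\mb{C}[\epsilon]((t)))$ on $V_\eta\otimes\mb{C}[\epsilon]((t))$ is a $\mb{C}[\epsilon]((t))$-linear automorphism. It therefore preserves inclusions of submodules and commutes with multiplication by the scalar $t^{-\la\lambda,\eta\ra}$. This turns the first inclusion into $(1-X\epsilon)L^0\subseteq t^{-\la\lambda,\eta\ra}L^0$, and the converse follows by applying $1+X\epsilon$.

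For equivariance, note that $G\times\mb{G}_m$ acts on $\Gr_G$ (constant loops and loop rotation) fixing $e$ and preserving $\FM_{\leq\lambda}$. For $\mb{G}_m$ this holds because rotation $t\mapsto ct$ preserves $L^0_\eta$ and rescales $t^{-\la\lambda,\eta\ra}$ by a unit. For $g\in G$ it holds because $gL^0_\eta=L^0_\eta$. The identification (\ref{eq_tan_sp}) intertwines the action with conjugation $X\mapsto gXg^{-1}$ and with rotation on $t^{-1}\mf{g}[t^{-1}]$, since $g(1+X\epsilon)L^0=(1+gXg^{-1}\epsilon)gL^0$. The defining condition is visibly stable under these operations, so the subspace is a $G\times\mb{G}_m$-subrepresentation and the bijection is equivariant.

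The main point requiring care is the first step: that $T_e$ of a closed subfunctor is cut out in $T_e\Gr_G$ exactly by its defining condition on $\mb{C}[\epsilon]$-points. This needs Definition~\ref{def_FM_sch} to describe $\FM_{\leq\lambda}$ functorially (which it does), together with the fact that the subset so obtained is a linear subspace. The latter follows because the condition is linear in $X$: expanding, the second condition says $X\cdot(V_\eta\otimes\mb{C}[[t]])\subseteq t^{-\la\lambda,\eta\ra}V_\eta\otimes\mb{C}[[t]]$ together with the corresponding condition at $\epsilon^0$.
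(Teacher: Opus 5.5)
Your proposal is correct and follows the same route as the paper: the paper's proof is a one-line remark that the lemma follows from the description (\ref{eq_tan_sp}) of $T_e\Gr_G$ together with Definition~\ref{def_FM_sch}. Your write-up spells out the details the paper leaves implicit, namely inverting $1+X\epsilon$, the $G\times\mb{G}_m$-equivariance, and the linearity of the condition in $X$.
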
 
  \begin{proof}
  This lemma immediately follows from the description (\ref{eq_tan_sp}) and the definition of  $\FM_{\leq \lambda}$.
  \end{proof}
  
 In \cite[Definition 4.2.6]{KPZ}, the authors gave a definition of FM Schubert scheme with one more condition than Definition \ref{def_FM_sch}, for which we denote by $\Gr^{\rm FM '}_{\leq \lambda}$. In the setting of Definition \ref{def_FM_sch}, the additional condition in \cite{KPZ} is following: for any $\eta\in X^*(T)^+$, 
\begin{equation}
\label{eq_cond_1}
 \wedge^{d_\eta} L_{\eta, R} = \wedge^{d_\eta} ( t^\lambda\cdot  L^0_{\eta ,   R} )   \end{equation}
regarded as $R[[t]]$-submodules in $\wedge^{d_\eta} ( L^0_{\eta,R}[\frac{1}{t}] )$.

 It was shown in  \cite[Proposition 6.4]{Hai} that $(\Gr^{\rm FM '}_{\leq \lambda})_{\rm red}= \Gr_{\leq \lambda}$. For general reductive group $G$, we don't know if $\FM_{\leq \lambda}=\Gr^{\rm FM '}_{\leq \lambda}$. However, by the following lemma, when $G$ is simply-connected and semisimple,  we do have $\FM_{\leq \lambda}=\Gr^{\rm FM '}_{\leq \lambda}$.  \begin{lemma}
 \label{lem_4.4}
When $G$ is simply-connected and semisimple,  for any $R$-point $(\ms{E}, \phi)\in \Gr_G(R)$, the condition (\ref{eq_cond_1}) holds. 
\end{lemma}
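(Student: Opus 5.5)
Condition (\ref{eq_cond_1}) concerns the top exterior power of $L_{\eta,R}$, which is governed by the determinant of the action of $G$ on $V_\eta$. Since $G$ is semisimple, the determinant character $\det\colon G\to \mb{G}_m$ of any representation $V_\eta$ is trivial, because $G$ has no nontrivial characters. So the plan is to show that $\wedge^{d_\eta} L_{\eta,R}$ is independent of the point, i.e.\ always equals $\wedge^{d_\eta}L^0_{\eta,R}$. Separately, we will show that $\wedge^{d_\eta}(t^\lambda\cdot L^0_{\eta,R})=\wedge^{d_\eta}L^0_{\eta,R}$. For the second claim we compute directly: $t^\lambda$ acts on the $\mu$-weight space of $V_\eta$ by $t^{\la\lambda,\mu\ra}$, so its determinant is $t^{\sum_\mu \la \lambda,\mu\ra \dim V_\eta(\mu)}$. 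The exponent is $\la\lambda, \text{sum of weights}\ra$, which vanishes because the multiset of weights is $W$-stable, so its sum is $W$-invariant and hence $0$ for semisimple $G$.

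For the first claim, we would show that the line $\wedge^{d_\eta}L_{\eta,R}\subset \wedge^{d_\eta}V_\eta\otimes R((t))\simeq R((t))$ is constant in $(\ms{E},\phi)$. Note that $\wedge^{d_\eta}L_{\eta,R}=\phi(\Gamma(D_R,\ms{E}(\det V_\eta)))$, where $\ms{E}(\det V_\eta)=\ms{E}\times^G \mb{C}$ with the trivial character. This bundle is canonically trivial: $\ms{E}\times^G\mb{C}\simeq D_R\times\mb{C}$. Under this identification $\phi$ induces the identity map of $\mathcal{O}_{D_R^*}$, because $\phi$ is a $G$-equivariant map of torsors and the determinant character kills the $G$-ambiguity. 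Hence $\wedge^{d_\eta}L_{\eta,R}=R[[t]]=\wedge^{d_\eta}L^0_{\eta,R}$.

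An alternative, more concrete route, if one prefers to avoid torsor language, is to work étale-locally (or fpqc-locally) on $R$. There one can write the point as $g\cdot e$ with $g\in G(R'((t)))$, using that $LG\to\Gr_G$ is an étale-local surjection. Then $L_{\eta,R'}=g\cdot L^0_{\eta,R'}$ and $\wedge^{d_\eta}L_{\eta,R'}=\det_{V_\eta}(g)\cdot R'[[t]]=R'[[t]]$ since $\det_{V_\eta}\circ g=1$. Equality of the two $R[[t]]$-submodules of the free rank-one $R((t))$-module can then be checked after the faithfully flat base change $R\to R'$. This uses that $R[[t]]\to R'[[t]]$ is still faithfully flat when $R'$ is étale (or a finite product of localizations) over $R$; handling that is the main technical obstacle. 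The cleanest way to settle it is probably the first argument, descending the trivialization of the determinant bundle, which avoids flatness issues for power series rings entirely. Combining the two claims gives (\ref{eq_cond_1}).
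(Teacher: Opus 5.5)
Your proof is correct, and its main step takes a genuinely different route from the paper. The treatment of $\wedge^{d_\eta}(t^\lambda\cdot L^0_{\eta,R})$, via the vanishing of the sum of the weights of $V_\eta$, is the same as in the paper. In fact it is subsumed by your first claim applied to the $R$-point $t^\lambda\cdot e$.

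For the equality $\wedge^{d_\eta}L_{\eta,R}=\wedge^{d_\eta}L^0_{\eta,R}$, the paper argues geometrically in four steps. It reduces to $R$ of finite type. It uses that $\Gr_G$ is ind-reduced to factor the point through a reduced closed subscheme $X$ of finite type. It then views both sides modulo $\wedge^{d_\eta}t^NL^0_{\eta,X}$ as vector bundles on $X$. Finally, it deduces equality from the pointwise equality at $\mb{C}$-points (Haines, Prop.~6.4) together with the reducedness of $X$.

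You instead use that top exterior powers commute with the associated-bundle construction, which holds because $D_R$ is affine. So $\wedge^{d_\eta}L_{\eta,R}$ is the image under $\phi$ of the sections of $\ms{E}\times^G\wedge^{d_\eta}V_\eta$. Since $G$ acts trivially on $\wedge^{d_\eta}V_\eta$, this bundle is canonically trivial, and the $G$-equivariant $\phi$ induces the identity on it.

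This works uniformly for every $\mb{C}$-algebra $R$. It needs neither the finite-type reduction, nor the nontrivial ind-reducedness of $\Gr_G$, nor the pointwise input from Haines. It also isolates exactly where semisimplicity enters, namely the triviality of $\det V_\eta$ as a $G$-representation. The paper's route, by contrast, has the advantage of reusing a known pointwise statement together with a standard ``equal at all points of a reduced base'' principle. Your étale-local alternative, with its flatness concern about $R[[t]]\to R'[[t]]$, is not needed, and you can drop it.
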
 
 \begin{proof}
 
 We first observe that $\wedge^{d_\eta} ( t^\lambda\cdot  L^0_{\eta ,   R} ) = \wedge^{d_\eta} L^0_{\eta,R}$, since for any weight $\delta$ in $V_\eta$, $\sum_{w\in W}w (\delta)=0$.  Thus, we are reduced to show $\wedge^{d_\eta} L_{\eta, R}= \wedge^{d_\eta} L^0_{\eta,R}$.
 
 From the proof of \cite[Theorem 1.1.3]{Zh} and \cite[Proposition 1.2.6]{Zh}, for any $R$-point $\Spec R\to \Gr_G$, there is a finitely generated $\mb{C}$-algebra $R'$ such that this $R$-point factor through a $R'$-point $\Spec R'\to \Gr_G$.  Thus, we can assume $R$ is finitely-generated over $\mb{C}$.  On the other hand,   since $G$ is semi-simple, $\Gr_G$ is ind-reduced, i.e. $\Gr_G$ is a union $\bigcup_{\alpha} X_\alpha$ of reduced closed subschemes $X_\alpha$ of finite type.  Thus, $\Spec R\to \Gr_G$ factors through a closed embedding $X\to \Gr_G$, where $X=X_\alpha$ for some $\alpha$.  We are now reduced to show, 
 \[ \wedge^{d_\eta} L_{\eta, X} = \wedge^{d_\eta} ( t^\lambda\cdot  L^0_{\eta ,   X} )   \]
 as $\ms{O}_X[[t]]$-submodules in $\wedge^{d_\eta} L^0_{\eta, X}[\frac{1}{t}]$. 

 Note that $L_{\eta, X}$ contains a lattice $t^{N}L^0_{\eta, X}$ when $N$ is sufficiently large.  Then,  $\wedge^{d_\eta}L_{\eta, X} $ and $\wedge^{d_\eta} L^0_{\eta,X}$ share the $\ms{O}_X[[t]]$-submodule $\wedge^{d_\eta} t^N L^0_{\eta,X}$. Therefore, to show $\wedge^{d_\eta}L_{\eta, X} =\wedge^{d_\eta} L^0_{\eta, X}$, it suffices to show that 
 \begin{equation}
 \label{eq_iso_vb}
    \wedge^{d_\eta}L_{\eta, X} / \wedge^{d_\eta} t^N L^0_{\eta,X}= \wedge^{d_\eta} L^0_{\eta,X}/\wedge^{d_\eta} t^N L^0_{\eta,X} .\end{equation}
They are both vector bundles over $X$ of finite rank. Over every $\mb{C}$-point $x\in X$, it is clear that 
\[  \wedge^{d_\eta} L_{\eta, x} = \wedge^{d_\eta} L^0_{\eta,x},    \]
  cf.\,\cite[Proposition 6.4]{Hai}. Thus, (\ref{eq_iso_vb}) is an identity over every point $x\in X$. It follows that $(\ref{eq_iso_vb})$ is an identity over $X$. 
 \end{proof}

		\subsection{}
		
		Let $\lambda\in X_*(T)^+$ be any nonzero dominant coweight.  Set 
		\begin{equation}\label{eq_min_lam}
		m_\lambda:= \min\big \{ \la \lambda, \eta \ra \,|\,  \eta\in X^*(T)^+ \backslash \{0\} \big \} .\end{equation}
		Since $G$ is simply-connected, $\lambda$ is a summation of positive coroots. Furthermore, to determine $m_\lambda$ it suffices to compute $\la\lambda, \eta \ra$ for all fundamental weights $\eta$.
	
\begin{theorem}
\label{thm_tangent_base}
There is an isomorphism 
\[ T_e\FM_{\leq \lambda} \simeq \bigoplus_{1\leq k \leq m_\lambda}   \mf{g}t^{-k}  \]
as representations of $G\times \mb{G}_m$. 
\end{theorem}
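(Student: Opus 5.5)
We apply Lemma \ref{lem_crit_FM} directly. A vector $X\in t^{-1}\mf{g}[t^{-1}]$ lies in $T_e\FM_{\leq \lambda}$ if and only if, for every $\eta\in X^*(T)^+$,
\[(1-X\epsilon)\cdot L^0_{\eta,\mb{C}[\epsilon]}\subseteq t^{-\la\lambda,\eta\ra}L^0_{\eta,\mb{C}[\epsilon]}.\]
Write $L^0_{\eta,\mb{C}[\epsilon]}=V_\eta\otimes\mb{C}[[t]]\oplus \epsilon V_\eta\otimes\mb{C}[[t]]$. An element of $(1-X\epsilon)L^0$ has the form $v-\epsilon Xv$ with $v\in V_\eta[[t]]$, modulo $\epsilon$-multiples of lattice elements. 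The first component $v$ automatically lies in $t^{-\la\lambda,\eta\ra}V_\eta[[t]]$, since $\la\lambda,\eta\ra\ge 0$. So the condition reduces to
\[X\cdot (V_\eta\otimes\mb{C}[[t]])\subseteq t^{-\la\lambda,\eta\ra}V_\eta\otimes\mb{C}[[t]],\]
that is, $X$ has pole order at most $\la\lambda,\eta\ra$ as an operator on $V_\eta((t))$.

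The subspace cut out by this condition is stable under $G\times\mb{G}_m$. Indeed, $G$ preserves each $V_\eta[[t]]$, and the condition is homogeneous in the $t$-degree. Hence it decomposes along the grading: write $X=\sum_{k\ge1}X_kt^{-k}$ with $X_k\in\mf{g}$. Applying $X$ to $v\in V_\eta\otimes 1$ and comparing coefficients of $t^{-k}$ shows that the condition holds exactly when $X_k$ acts by zero on $V_\eta$ for every $k>\la\lambda,\eta\ra$. Conversely, if this vanishing holds, then $X$ maps $V_\eta[[t]]$ into $t^{-\la\lambda,\eta\ra}V_\eta[[t]]$.

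The tangent space is therefore $\bigoplus_{k\ge1}\{x\in\mf{g}\mid x\cdot V_\eta=0 \text{ for all }\eta\neq0 \text{ with } \la\lambda,\eta\ra<k\}\,t^{-k}$. Here $\eta=0$ imposes nothing, since $\mf{g}$ acts trivially on $V_0$. The key point is the following dichotomy:
\begin{itemize}
\item if $k\le m_\lambda$, there is no nonzero $\eta$ with $\la\lambda,\eta\ra<k$, so the whole of $\mf{g}t^{-k}$ survives;
\item if $k>m_\lambda$, pick $\eta\neq0$ attaining the minimum $m_\lambda$. The kernel of $\mf{g}\to\mathfrak{gl}(V_\eta)$ is an ideal of the simple Lie algebra $\mf{g}$, and this representation is nontrivial, so the kernel is $0$ and the $t^{-k}$ component vanishes.
\end{itemize}
This yields $\bigoplus_{1\le k\le m_\lambda}\mf{g}t^{-k}$, compatibly with the $G\times\mb{G}_m$-action, via the isomorphism (\ref{eq_tan_Gr}).

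I expect the main obstacle to be the careful $\mb{C}[\epsilon]$-module bookkeeping in the first step. One must justify that the inclusion of $\mb{C}[\epsilon][[t]]$-lattices really reduces to the single statement about $X$. This uses the fact that $(1-X\epsilon)L^0$ is generated by the elements $v-\epsilon Xv$ together with $\epsilon L^0$, and that $\epsilon L^0\subseteq t^{-\la\lambda,\eta\ra}L^0$ trivially. One must also check that the coefficientwise argument handles general $v\in V_\eta[[t]]$ and not only constant $v$. This follows by $\mb{C}[[t]]$-linearity and because multiplying by $t^j$ with $j\ge0$ only helps. The representation-theoretic input, namely the faithfulness of nontrivial $V_\eta$, is immediate from the simplicity of $\mf{g}$.
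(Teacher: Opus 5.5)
Your proof is correct. It uses the same reduction as the paper: Lemma \ref{lem_crit_FM} turns the condition into a pole-order bound for $X$ acting on $V_\eta\otimes\mb{C}[[t]]$. It diverges from the paper in how it shows that $\mf{g}t^{-k}$ drops out for $k>m_\lambda$.

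The paper argues as follows:
\begin{enumerate}
\item It uses the $G\times\mb{G}_m$-stability of $T_e\FM_{\leq\lambda}$ and the irreducibility of the adjoint representation to reduce to the single test vector $x_{-\theta}t^{-k}$.
\item For $k>m_\lambda$, it applies $x_{-\theta}t^{-k}\epsilon$ to the highest weight vector $v_{\eta_0}$.
\item Nonvanishing of $x_{-\theta}v_{\eta_0}$ comes from every simple coroot appearing in $\theta^\vee$ with positive coefficient, so that $\la\eta_0,\theta^\vee\ra>0$.
\item Assembling the final direct sum tacitly uses that a $\mb{G}_m$-stable subspace of $t^{-1}\mf{g}[t^{-1}]$ splits along the $t$-grading.
\end{enumerate}

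You instead compute the entire subspace coefficientwise. You show that $X=\sum_k X_kt^{-k}$ lies in the tangent space iff each $X_k$ lies in $\bigcap\ker(\mf{g}\to\mathfrak{gl}(V_\eta))$, the intersection taken over nonzero $\eta$ with $\la\lambda,\eta\ra<k$. You then kill this for $k>m_\lambda$ by faithfulness of nontrivial representations of the simple Lie algebra $\mf{g}$.

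What your route buys:
\begin{enumerate}
\item It needs neither the irreducibility of $\mf{g}$ nor the root-theoretic fact about $\theta^\vee$.
\item The grading decomposition comes out of the computation rather than from an equivariance argument.
\item It adapts directly to semisimple $\mf{g}$, where each $t^{-k}$-component becomes a sum of simple ideals instead of all-or-nothing.
\end{enumerate}
The paper's version is shorter and names an explicit vector witnessing the failure.

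The $\mb{C}[\epsilon]$-lattice bookkeeping you flag as the main obstacle is exactly the step the paper dismisses as obvious. Your treatment of it, writing $(1-X\epsilon)(v+\epsilon w)=v+\epsilon(w-Xv)$ and using $\la\lambda,\eta\ra\ge0$, is complete.
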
	
	\begin{proof}
	Under the identification (\ref{eq_tan_Gr}),  to check $\mf{g}t^{-k}\subset T_e \FM_{\leq \lambda}$, it suffices to verify that $x_{-\theta}t^{-k}\in T_e \FM_{\leq \lambda}$, where $x_{-\theta}$ is a nonzero root vector in $\mf{g}_{-\theta}$. Because the tangent space $T_e \FM_{\leq \lambda}$ is a representation of $G\times \mb{G}_m$ and the adjoint representation $\mf{g}$ of $G$ is irreducible. By Lemma \ref{lem_crit_FM}, it then suffices to check for any nontrivial $\eta\in X^*(T)^+$, 
 \begin{equation}
 \label{eq_lw_crit}
  (1-x_{-\theta}t^{-k}  \epsilon)\cdot L^0_{\eta, \mb{C}[\epsilon]} \subseteq t^{-\la \lambda, \eta \ra} L^0_{\eta, \mb{C}[\epsilon]} . \end{equation}
  When $1\leq k\leq m_\lambda\leq \la \lambda, \eta \ra$, the condition (\ref{eq_lw_crit}) obviously holds for any $\eta\not=0$.  
  
In view of the $G\times \mb{G}_m$-action on $T_e \FM_{\leq \lambda}$,  if $x_{-\theta}t^{-k}\not \in T_e \FM_{\leq \lambda}$, then any nonzero vector in $\mf{g}t^{-k}$ is not contained in $T_e \FM_{\leq \lambda}$.  Let $v_\eta$ be the highest weight vector in $V_\eta$. Since the coefficient of each simple coroot in $\theta^\vee$ is nonzero (cf.\,\cite[Table 2, p.66]{Hu}), we have 
 $\la \eta, \theta^\vee \ra>0$. Then,  
 \[ (x_{-\theta}t^{-k}  \epsilon)\cdot v_\eta= (x_{-\theta}\cdot v_\eta)t^{-k} \epsilon  \not=0.  \]
 When $k> m_\lambda$, there exists a nonzero dominant weight $\eta_0\in X^*(T)^+$ such that $k> \la \lambda, \eta_0\ra$.  Then, 
 the condition (\ref{eq_lw_crit}) does not hold for $\eta_0$.   By Lemma \ref{lem_crit_FM}, $x_{-\theta}t^{-k} \not \in T_e\FM_{\leq \lambda}$.  As a consequence,  when $k> m_\lambda$, $T_e\FM_{\leq \lambda}$ does not contain any nonzero vectors in $\mf{g}t^{-k}$.  Therefore, we have 
 \[ T_e\FM_{\leq \lambda} \simeq \bigoplus_{1\leq k \leq m_\lambda}   \mf{g}t^{-k}  .\]
	\end{proof}

	We now apply this theorem to the quasi-minuscule coweight $\theta^\vee$ of $G$.
	\begin{corollary}
	\label{tan_FM_theta}
	\begin{enumerate}
	\item If $G$ is not of type $E_8$,  then 
	\[  T_e\FM_{\leq \theta^\vee}\simeq \mf{g}t^{-1} . \]
	\item If $G$ is of type $E_8$, then 
	\[  T_e\FM_{\leq \theta^\vee}\simeq \mf{g}t^{-1}\oplus \mf{g}t^{-2} . \]
	\end{enumerate}
	\end{corollary}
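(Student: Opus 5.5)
By Theorem \ref{thm_tangent_base}, it suffices to compute $m_{\theta^\vee}=\min\{\la \theta^\vee,\eta\ra \mid \eta\in X^*(T)^+\setminus\{0\}\}$. Since every nonzero dominant weight is a nonnegative integer combination of fundamental weights $\omega_1,\dots,\omega_r$, and $\la\theta^\vee,\cdot\ra$ is additive, the minimum is attained on a fundamental weight. Thus
\[ m_{\theta^\vee}=\min_i \la \theta^\vee,\omega_i\ra = \min_i a_i^\vee ,\]
where $\theta^\vee=\sum_i a_i^\vee\alpha_i^\vee$ is the expansion in simple coroots. Here we use $\la\alpha_j^\vee,\omega_i\ra=\delta_{ij}$, which holds because $G$ is simply-connected, so the $\omega_i$ lie in $X^*(T)$. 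The coefficients $a_i^\vee$ are the dual Kac labels. Equivalently, $\theta^\vee$ is the highest short coroot of the dual root system, i.e. the highest short root of $\Phi^\vee$ written in simple coroots.

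The problem is therefore reduced to reading off, type by type, whether some $a_i^\vee$ equals $1$ (cf.\ \cite[Table 2, p.66]{Hu} applied to the dual root system, or the tables of dual Coxeter labels in \cite{Kac}). This is the one step that needs care, since one must use the coroot $\theta^\vee$ rather than the root $\theta$.

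The labels are as follows.
\begin{enumerate}
\item Type $A_n$: all $a_i^\vee=1$.
\item Types $B_n$ and $C_n$: there is a node with label $1$. For $B_n$, $\theta^\vee$ is the highest short root of $C_n$, namely $\alpha_1^\vee+2\alpha_2^\vee+\cdots+2\alpha_{n-1}^\vee+\alpha_n^\vee$. For $C_n$, it is the highest short root of $B_n$, namely $\alpha_1^\vee+\cdots+\alpha_n^\vee$.
\item Type $D_n$: the end nodes have label $1$.
\item Types $E_6$ and $E_7$: the simply-laced highest root has some coefficient $1$, at the end nodes.
\item Type $G_2$: the dual labels are $(1,2)$.
\item Type $F_4$: the dual labels are $(1,2,3,2)$, read on the dual diagram.
\item Type $E_8$: the highest root is $2\alpha_1+3\alpha_2+4\alpha_3+6\alpha_4+5\alpha_5+4\alpha_6+3\alpha_7+2\alpha_8$ in Bourbaki labeling. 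All coefficients are at least $2$ and the minimum is exactly $2$.
\end{enumerate}

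Hence $m_{\theta^\vee}=1$ in every type except $E_8$, where $m_{\theta^\vee}=2$. Plugging these values into Theorem \ref{thm_tangent_base} gives $T_e\FM_{\leq\theta^\vee}\simeq\mf{g}t^{-1}$ in case (1) and $T_e\FM_{\leq\theta^\vee}\simeq\mf{g}t^{-1}\oplus\mf{g}t^{-2}$ in case (2), as representations of $G\times\mb{G}_m$.
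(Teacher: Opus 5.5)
Your proposal is correct and follows the paper's proof. Both reduce via Theorem \ref{thm_tangent_base} to computing $m_{\theta^\vee}$ as the minimal coefficient of $\theta^\vee$ in the simple coroots, which is $1$ in every type except $E_8$, where it is $2$. Your added type-by-type dual labels are all accurate, apart from a small wording slip: in $E_6$ and $E_7$ only some end nodes carry label $1$ (nodes $1,6$ and node $7$ respectively), not all of them.
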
	
	\begin{proof}
By Theorem \ref{thm_tangent_base}, it suffices to determine the number $m_{\theta^\vee}$. In the following, we follow the  labelling of Dynkin diagram in \cite{Hu}. 
	
	For Part 1).  Note that $\theta^\vee$ is the highest short coroot of $G$.  Then,  from \cite[Table 2, p.66]{Hu}, we see that for all simple root systems except $E_8$ with a chosen base $\Delta$, $\theta^{\vee}= \sum a_i \alpha_i^{\vee}$ always has some $a_i=1$.  Thus, $m_{\theta^\vee}=1$.


For Part 2).  When $G$ is of type $E_8$, the coroot $\theta^{\vee}$ can be expressed as follows: 
\[\theta^{\vee}=2 \alpha_1^{\vee}+3 \alpha_2^{\vee}+4 \alpha_3^{\vee}+6\alpha_4^{\vee}+5\alpha_5^{\vee}+4\alpha_6^{\vee}+3\alpha_7^{\vee}+2\alpha_8^{\vee}.\]
From this formula, it is clear that  $m_{\theta^\vee}=2$.
\end{proof}

Finally, we can conclude the following theorem, which exhibits a nonreduced FM Schubert scheme.
\begin{theorem}
When $G$ is of type $E_8$,  the FM Schubert Scheme $\FM_{\leq \theta^\vee}$ is not reduced. 
\end{theorem}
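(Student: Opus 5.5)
The plan is to compare Zariski tangent spaces at the base point $e$. We already have the closed immersion $\Gr_{\leq \theta^\vee}\to \FM_{\leq \theta^\vee}$ and the equality $(\FM_{\leq \theta^\vee})_{\rm red}\simeq \Gr_{\leq \theta^\vee}$ (cf.\,\cite[Proposition 2.1.4]{Zh}). If $\FM_{\leq \theta^\vee}$ were reduced, this closed immersion would be an isomorphism of schemes. In particular, the induced map on Zariski tangent spaces at $e$ would be an isomorphism:
\[ T_e \Gr_{\leq \theta^\vee}\xrightarrow{\ \simeq\ } T_e \FM_{\leq \theta^\vee}, \]
and both sides sit compatibly inside $T_e\Gr_G\simeq t^{-1}\mf{g}[t^{-1}]$ via (\ref{eq_tan_Gr}).

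Next I would compute the two sides. By Proposition \ref{prop_tan_Sch}, $T_e\Gr_{\leq \theta^\vee}\simeq \mf{g}t^{-1}$, which has dimension $\dim G$. By Corollary \ref{tan_FM_theta}(2), when $G$ is of type $E_8$ we have $T_e\FM_{\leq \theta^\vee}\simeq \mf{g}t^{-1}\oplus \mf{g}t^{-2}$, of dimension $2\dim G=496$. Comparing dimensions gives $248\neq 496$, so the map cannot be an isomorphism. Hence $\FM_{\leq \theta^\vee}$ is not reduced.

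The only point needing care is that both tangent spaces are realized as subspaces of the same ambient space $t^{-1}\mf{g}[t^{-1}]$, with compatible identifications. This holds because both schemes are closed subschemes of $\Gr_G$ and the closed immersion commutes with the embeddings into $\Gr_G$. Even without this compatibility, the abstract dimension comparison already suffices, since isomorphic schemes have isomorphic tangent spaces at corresponding points and $e$ maps to $e$.

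I do not expect a genuine obstacle here: the substantive work is in Theorem \ref{thm_tangent_base} and the computation $m_{\theta^\vee}=2$ for $E_8$. Nonreducedness also follows locally: $\FM_{\leq\theta^\vee}$ is not reduced at $e$, so its local ring there has nonzero nilpotents.
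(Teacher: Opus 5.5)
Your proposal is correct and is the paper's argument: the paper's proof simply cites Proposition \ref{prop_tan_Sch} ($T_e\Gr_{\leq\theta^\vee}\simeq\mathfrak{g}t^{-1}$, of dimension $248$) against Corollary \ref{tan_FM_theta}(2) ($T_e\FM_{\leq\theta^\vee}\simeq\mathfrak{g}t^{-1}\oplus\mathfrak{g}t^{-2}$, of dimension $496$). You additionally spell out the implicit step that a reduced $\FM_{\leq\theta^\vee}$ would coincide with $(\FM_{\leq\theta^\vee})_{\rm red}\simeq\Gr_{\leq\theta^\vee}$, so the two tangent spaces at $e$ would have to agree.
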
	
\begin{proof}
It follows from Proposition \ref{prop_tan_Sch} and Part 2) of Corollary \ref{tan_FM_theta}.
\end{proof}

\subsection{Counterexamples in type $E_6$ and $E_7$}
\label{Section 4.3}
When $G$ is of type $E_6$,  consider the coweight 
$$\omega_1^\vee+\omega_6^\vee=2\alpha^\vee_1+ 2\alpha_2^\vee+ 3 \alpha^\vee_3+ 4\alpha^\vee_4+ 3\alpha^\vee_5+2\alpha^\vee_6,$$ 
cf.\,\cite[Table 1, p.69]{Hu}, where $\omega^\vee_i$ denote the $i$-th fundamental coweight of $G$.  Thus, $m_{\omega_1^\vee+\omega_6^\vee}=2$ and by Theorem \ref{thm_tangent_base}, 
$$ T_e \FM_{\preceq \omega_1^\vee+\omega_6^\vee} = \mf{g}t^{-1}\oplus \mf{g}t^{-2}.$$
  Since $\theta^\vee\prec \omega_1^\vee+\omega_6^\vee$,  by Proposition \ref{prop_tan_Sch}, 
we have 
$$\mf{g}t^{-1}\simeq  T_e\Gr_{\preceq \theta^\vee}\subset T_e\Gr_{\preceq \omega_1^\vee+\omega_6^\vee}. $$
By Polo criterion (cf.\,Theorem \ref{KMPolo}),  there is an embedding $T_e\Gr_{\preceq \omega_1^\vee+\omega_6^\vee}\hookrightarrow D(1,\omega_1^\vee+\omega_6^\vee)$  of $G\times \mathbb{G}_{\rm m}$-representations.  
 Observe from the decomposition of $D(1, \omega^\vee_1+\omega^\vee_6)$ in \cite[\S5.1, p.25]{BH}, the adjoint representation $\mf{g}\simeq V_{\omega_2}$ appears in $D(1,\omega_1^\vee+\omega_6^\vee)$ with multiplicity one.  Thus, 
 $$T_e\Gr_{\preceq \omega_1^\vee+\omega_6^\vee}\simeq \mf{g} t^{-1}.$$
 We conclude that $\Gr_{\preceq \omega_1^\vee+\omega_6^\vee}\not= \FM_{\preceq \omega_1^\vee+\omega_6^\vee}$.

When $G$ is of type $E_7$,  consider  
$$ \omega_6^\vee=2\alpha^\vee_1+ 3\alpha_2^\vee+ 4 \alpha^\vee_3+ 6\alpha^\vee_4+ 5\alpha^\vee_5+4\alpha^\vee_6+ 2\alpha^\vee_7. $$
  In this case, $m_{\omega_6^\vee}= 2$.   By the computation in \cite[\S3, Case $E_7$]{Kl} (note that the module $W_1(6)$ considered there is isomorphic to our Demzure module $D(1,\omega^\vee_6)$), the adjoint representation $\mf{g}\simeq V_{\omega_1}$ also appears in $D(1, \omega_6^\vee)$ with multiplicity one. By the similar argument as in the previous case,  $T_e\Gr_{\preceq \omega_6^\vee}\simeq \mf{g} t^{-1}$.   Therefore,   $\Gr_{\preceq \omega_6^\vee}\not= \FM_{\preceq \omega_6^\vee}$. 
  
\begin{remark}
When $G$ is of type $E_6$,  $\omega^\vee_1$ and $\omega^\vee_6$ are minuscule. Our example above shows that when $\lambda$ is a summation of minuscule coweights, the reducedness conjecture still could fail. 
  
\end{remark}

\end{document}